\documentclass[reqno]{amsart}

\usepackage{amsmath,amssymb,amsfonts}
\usepackage[margin=1.15in]{geometry}
\usepackage[T1]{fontenc}
\usepackage{microtype}
\usepackage{amsthm,mathtools}
\usepackage{enumitem}
\usepackage{xcolor}
\usepackage{url}
\usepackage[colorlinks=true,linkcolor=blue!55!black,
  citecolor=blue!55!black,urlcolor=blue!55!black]{hyperref}
\usepackage[capitalize,noabbrev]{cleveref}

\hypersetup{
  pdftitle={The Noetherian Case of Bayart's Power-Series Question},
  pdfauthor={}
}

\theoremstyle{plain}
\newtheorem{theorem}{Theorem}[section]
\newtheorem{proposition}[theorem]{Proposition}
\newtheorem{lemma}[theorem]{Lemma}
\newtheorem{corollary}[theorem]{Corollary}

\theoremstyle{definition}

\newcommand{\Spec}{\operatorname{Spec}}
\newcommand{\Supp}{\operatorname{Supp}}
\newcommand{\Ass}{\operatorname{Ass}}
\newcommand{\Jac}{\operatorname{Jac}}
\newcommand{\depth}{\operatorname{depth}}
\newcommand{\Ext}{\operatorname{Ext}}
\newcommand{\Tor}{\operatorname{Tor}}
\newcommand{\Hom}{\operatorname{Hom}}
\newcommand{\Frac}{\operatorname{Frac}}
\newcommand{\htp}{\operatorname{ht}}

\title{The Noetherian Case of Bayart's Power-Series Question}

\author{Viet-Hoang Tran}
\address{Department of Mathematics, National University of Singapore, Singapore 119076}
\email{hoang.tranviet@u.nus.edu}
\urladdr{https://vh-tran.github.io/}

\author{Dung V. Nguyen}
\address{Department of Mathematics, National University of Singapore, Singapore 119076}
\email{dungnv@u.nus.edu}
\urladdr{https://kpup1710.github.io/}

\author{Quang X. Nguyen}
\address{Independent Researcher}
\email{ngxquang.work@gmail.com}

\author{Thieu N. Vo}
\address{Department of Computer Science, University of Bath, United Kingdom}
\email{ntv22@bath.ac.uk}

\author{Tan M. Nguyen}
\address{Department of Mathematics, National University of Singapore}
\email{tanmn@nus.edu.sg}
\urladdr{https://tanmnguyen89.github.io/}

\keywords{commutative algebra, formal power series, unique factorization domains}

\begin{document}

\begin{abstract}
Let $R$ be a commutative Noetherian ring.  We prove that if the one-variable
formal power-series ring $R[[x]]$ is a unique factorization domain, then so is
the two-variable formal power-series ring $R[[x,y]]$.  This resolves a
question raised by Bayart in 1973 for Noetherian coefficient rings.  The proof
uses the divisor theory of Noetherian normal domains,
expressed through finite rank-one reflexive modules.
\end{abstract}

\maketitle

\section{Introduction}


For a unique factorization domain (UFD) $R$, it is natural to ask whether the formal
power-series ring $R[[y]]$ is again a UFD. The subject has a long history.
Lasker obtained an early positive result in 1905 in the complete local
setting, in particular for formal power-series rings over a field
\cite{Lasker1905}.  The general question was studied systematically by Krull
in the 1930s.  He proved that \(R[[y]]\) is a UFD when \(R\) is a discrete
valuation ring with infinite residue field \cite{Krull1938}; Cohen subsequently
settled the corresponding finite residue-field case \cite{Cohen1946}.  These
results notwithstanding, unique factorization is not preserved by formal
power-series extension in general.  In 1961 Samuel constructed an
example of a UFD \(R\) for which \(R[[y]]\) is not a UFD
\cite[\S4]{Samuel1961}.  His work initiated an extensive investigation of
the UFD property in power-series and complete local rings, with subsequent
contributions by Buchsbaum, Claborn, Danilov, Salmon, and others
\cite{Buchsbaum1961,Claborn1965,Danilov1970,Salmon1964}.  In particular,
Samuel asked whether the additional hypothesis that \(R\) be a complete local
UFD would force \(R[[y]]\) to be a UFD \cite[\S4]{Samuel1961}; this too has
a negative answer, as shown by Salmon \cite{Salmon1966}.  Samuel's study of
the obstructions to unique factorization also had consequences beyond
power-series rings.  Most notably,
Grothendieck proved in SGA~2 that a Noetherian local complete-intersection
domain whose localizations at all prime ideals of height at most \(3\) are
UFDs is itself a UFD, thereby settling another conjecture of Samuel
\cite[Expos\'e~XI, Corollaire~3.14]{SGA2}; see also Hartshorne--Ogus
\cite{HartshorneOgus1974} and
Lipman \cite{Lipman1975} for further developments and perspective.

A fundamental positive result in the Noetherian theory was proved
independently by Samuel and Buchsbaum: if \(R\) is a Noetherian regular UFD,
then the formal power-series ring \(R[[y]]\) is again a UFD
\cite[Theorem~2.1]{Samuel1961} and
\cite[Theorem~3.2]{Buchsbaum1961}.
In fact, Samuel's theorem shows that \(R[[y]]\) remains a regular UFD.
Thus unique factorization is stable under adjoining a formal power-series
variable within the class of Noetherian regular UFDs.  Beyond the regular
Noetherian setting, however, the behavior of unique factorization under
formal power-series extension is considerably more subtle.

Bayart asked a different question: if $R$ is a domain and $R[[x]]$ is a UFD,
must $R[[x,y]]$ also be a UFD \cite[p.~449]{Bayart1973}?  Writing
$A=R[[x]]$, the hypothesis gives that $A$ is a UFD, but it imposes no
regularity hypothesis on $A$; consequently, the Samuel--Buchsbaum theorem does
not apply directly.  Bayart later proved a finite irreducibility theorem for
UFDs containing $\mathbb Q$
\cite[\S IV, Th\'eor\`eme~(T)]{Bayart1981}.  Jones and Paran extended this
theorem to arbitrary Krull domains and used it to solve Landweber's problem;
they also recorded in July 2026 that Bayart's question remained open
\cite[Theorems~6.4 and~7.3, and \S7]{JonesParan2026}.

Our main result gives an affirmative answer when the coefficient ring is
Noetherian.

\begin{theorem}\label{thm:bayart-noetherian}
Let $R$ be a commutative Noetherian ring.  If $R[[x]]$ is a UFD, then
$R[[x,y]]$ is a UFD.
\end{theorem}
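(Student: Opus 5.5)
The plan is to prove that the class group of $C:=R[[x,y]]$ vanishes, working with finite rank-one reflexive modules. Set $A:=R[[x]]$, a Noetherian UFD by hypothesis. Then $C\cong A[[y]]$ is a Noetherian domain. It is normal: an element of $\Frac(C)$ integral over $C$ lies in $\Frac(A)[[y]]$ by comparing coefficients, and hence in $A[[y]]$ because $A$ is completely integrally closed. So $C$ is a Noetherian Krull domain, and it is a UFD iff every finite reflexive $C$-module $M$ of rank one is free.

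Fix such an $M$. Since $y$ lies in the Jacobson radical of $C$ and $C$ is $y$-adically complete, Nakayama's lemma shows that $M$ is free as soon as $\overline M:=M/yM$ is free over $C/yC\cong A$. Because $A$ is a UFD, it is enough to show that $\overline M$ is a reflexive $A$-module of rank one. That $\overline M$ has rank one is routine from the flatness of $C$ over $A$, which holds because $A$ is Noetherian. The real content is reflexivity. For a Noetherian normal domain, a finite torsion-free module is reflexive iff it satisfies Serre's condition $(S_2)$. So one must show two things: $y$ is $M$-regular and $\overline M$ is torsion-free, which follows from $M\subseteq M^{**}$ being $(S_2)$ and $M$ being torsion-free; and $\depth \overline M_{\mathfrak p}\ge 2$ for primes $\mathfrak p$ of $A$ of height $\ge 2$, i.e.\ $\depth M_{P}\ge 3$ for primes $P\ni y$ of $C$ of height $\ge 3$.

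This depth condition is where the Samuel--Buchsbaum argument uses regularity, and I expect it to be the main obstacle. Here I would exploit the extra symmetry that the hypothesis provides. Besides $C/yC\cong A$, we also have $C/xC\cong R[[y]]\cong A$ and, for every $a\in R$, $C/(x-ay)C\cong A$. All of these quotients are UFDs. First I would try an inductive reduction on the height of a prime $P$ of $C$, localizing at $P$ and choosing among the elements $y$, $x$, $x-ay$ one that is a parameter for $M_P$. The aim is to pass the $(S_2)$ property of $M$ down to the quotient and back using the fact that every such quotient is a UFD.

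Should that fail, the fallback is to compare divisor classes directly. Consider the natural map $\operatorname{Cl}(A)\to\operatorname{Cl}(A[[y]])$ and the specialization maps $\operatorname{Cl}(C)\to\operatorname{Cl}(C/\ell C)$, where $\ell$ runs over the linear forms above; these are defined on the classes for which the quotient is normal and the reduction is reflexive. The goal is to show that a nonzero class of $\operatorname{Cl}(C)$ survives specialization along some $\ell$. That would contradict $\operatorname{Cl}(C/\ell C)=\operatorname{Cl}(A)=0$.
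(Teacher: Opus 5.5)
Your proposal has a genuine gap: the step you flag as the main obstacle is never proved. For a fixed section, the claim that $M/yM$ is reflexive for every rank-one reflexive $M$ is equivalent to the theorem. It yields freeness by your own Nakayama step, and conversely it is automatic once $C$ is known to be a UFD. So the UFD hypothesis on $A$ cannot do any work until this step is done.

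Your local strategy cannot do it. Since $M$ is torsion-free, every nonzero $\ell\in P$ is $M_P$-regular and lowers depth by exactly one, so being \emph{a parameter} for $M_P$ is irrelevant. Worse, suppose $\depth C_P=3$ and $\depth M_P=2$. Then for \emph{every} $\ell\in P$ the module $M_P/\ell M_P$ has depth $1$ over a ring of depth $2$, so it is not reflexive, however factorial the quotient is. Such primes cannot be ruled out in advance; they must be missed by a single global section.

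Your family cannot guarantee that. All of $y$, $x$, $x-ay$ lie in $(x,y)$. So a prime $P=(\mathfrak a,x,y)$ with $\depth R_{\mathfrak a}\geq 2$, where reflexivity of $M/yM$ still demands $\depth M_P\geq 3$, is never avoided. Off $V(x,y)$ the linear family has no genericity either. A prime $P\not\ni y$ containing $x-ay$ also contains $x-by$ for every $b\equiv a$ modulo $P\cap R$. Hence, if $R$ has a maximal ideal with finite residue field, finitely many primes off $V(x,y)$ can meet every member. The class-group fallback only restates the goal, since the specialization map is defined only on classes whose reduction is reflexive, which is the same missing step.

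The paper supplies two ideas that your plan lacks.

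\emph{Reduce the dual rather than $M$.} Put $E=M/zM$. Then $E^\vee$ is reflexive of rank one whether or not $E$ is, so it is free over the UFD $A$. It then suffices that $E$ be free at primes of $A$ of depth at most $2$, for then:
\begin{enumerate}[label=\textup{(\arabic*)}]
  \item the cokernel of $E\to E^{\vee\vee}\cong A$ is supported in depth at least $3$, so $\Ext^1_A(E,A)=0$;
  \item since $z$ is $M$-regular, $\Ext^1_A(E,A)\cong\Ext^1_C(M,A)$, so $z$ acts surjectively on $\Ext^1_C(M,C)$, and Nakayama gives $\Ext^1_C(M,C)=0$;
  \item hence $M^\vee/zM^\vee\cong E^\vee\cong A$, Nakayama gives $M^\vee\cong C$, and $M\cong M^{\vee\vee}\cong C$.
\end{enumerate}
This disposes of the unavoidable primes in $V(x,y)$. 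At those whose image in $A$ has depth at most $2$, the ring $R_{\mathfrak a}$ is a field or a DVR, so $C_P$ is regular and $M_P$ is free. The others never need to be examined.

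\emph{Avoid the bad primes off $V(x,y)$.} If $\depth C_P\geq 3$ and $\depth M_P=2$, then $P\in\Ass_C\Ext^1_C(M^\vee,C)$, which is a finite set. In the nonlinear family $z_N=y-x^N$, each prime not containing $(x,y)$ contains at most one member, because $x^n-x^m=x^n(1-x^{m-n})$ with $1-x^{m-n}$ a unit. Choosing $z=z_N$ outside these finitely many primes makes $E_x$ reflexive. It is then free, since $(E_x)^\vee\cong(E^\vee)_x$ is free.
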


In particular, Bayart's question has an affirmative answer throughout the
Noetherian category.  Iterating \cref{thm:bayart-noetherian}, we obtain that if
$R$ is Noetherian and $R[[x_1]]$ is a UFD, then
$R[[x_1,\ldots,x_n]]$ is a UFD for every $n\geq 1$.

Set $A=R[[x]]$ and $B=R[[x,y]]$.  The proof uses the characterization of
the UFD property in terms of finite rank-one reflexive modules.  Given such a
$B$-module $F$, we choose a prime section $z=y-x^N$ avoiding the relevant
associated primes of $\Ext^1_B(F^\vee,B)$.  Since $B/zB\cong A$, the fact that
$A$ is a UFD, together with a depth analysis, controls $F/zF$; an Ext-vanishing
and Nakayama argument then
lifts freeness to $F^\vee$, and hence to $F$.  Section~2 develops the required
reflexive-module and lifting results, and Section~3 constructs the movable
section and completes the proof.

\section{Reflexive modules and depth}

Throughout, rings are commutative with identity, ring homomorphisms preserve
identity, and modules called finite are finitely generated.  If $S$ is a
domain and $M$ is a finite $S$-module, write
\[
  M^\vee=\Hom_S(M,S).
\]
The same notation is used after localization or passage to a quotient when
the base ring is clear.  The depth of the zero module is $+\infty$.

We begin with the reflexive-module facts used below.  If $S$ is a
Noetherian normal domain with fraction field $K$, and $M$ is a finite
$S$-module, then
\begin{equation}
\label{eq:reflexive-characterizations}
\begin{aligned}
  M\text{ is reflexive}
  &\quad\Longleftrightarrow\quad
  M\text{ is torsion-free and satisfies }(S_2)\\
  &\quad\Longleftrightarrow\quad
  M\text{ is torsion-free and }
  M=\bigcap_{\htp \mathfrak p=1}M_{\mathfrak p}
       \text{ inside }M\otimes_S K.
\end{aligned}
\end{equation}
Here the intersection characterization is
\cite[Proposition~1, pp.~238--239]{Samuel1964}.  To compare it with the $(S_2)$
formulation, use the reflexivity criterion
\cite[Proposition~1.4.1]{BrunsHerzog} together with Serre's normality
criterion \cite[Theorem~39, p.~125]{MatsumuraCA}.  Indeed, over a normal domain the
local rings of height at most one are a field or a DVR, while at all other
primes the ring has depth at least two.  Thus the local conditions in
Bruns--Herzog are exactly torsion-freeness and $(S_2)$.

We shall also use the following two consequences.  Reflexivity
commutes with localization: choose a finite presentation of $M$, observe by
localizing its dual presentation that
$(M^\vee)_{\mathfrak p}\cong(M_{\mathfrak p})^\vee$, and localize the
evaluation map.  Moreover, $L^\vee$ is reflexive for every finite module
$L$ over a Noetherian normal domain.  To see this directly, dualize a finite
presentation of $L$.  This embeds $L^\vee$ in a finite free module with
torsion-free cokernel; the depth lemma
\cite[Proposition~1.2.9]{BrunsHerzog} gives depth at least two wherever the
base ring has depth at least two.  At primes where the base ring has depth at
most one, normality makes the local ring a field or a DVR; the localized dual
is torsion-free there and hence free.  The criterion
\cite[Proposition~1.4.1]{BrunsHerzog} now proves reflexivity.

We also recall the regular-element formula.  If $(C,\mathfrak m)$ is
Noetherian local, $H$ is a nonzero finite $C$-module, and
$h\in\mathfrak m$ is $H$-regular, then
\begin{equation}
\label{eq:regular-element-depth}
  \depth_C(H/hH)=\depth_C(H)-1.
\end{equation}
Indeed, apply $\Hom_C(C/\mathfrak m,-)$ to
$0\to H\xrightarrow{h}H\to H/hH\to0$.  Multiplication by $h$ is zero on
each $\Ext_C^i(C/\mathfrak m,H)$, so the long exact sequence and the
grade--Ext description of depth
\cite[Proposition~1.2.10(e)]{BrunsHerzog} give
\eqref{eq:regular-element-depth}.

\begin{proposition}[Reflexive criterion for the UFD property]
\label{prop:reflexive-ufd}
Let $D$ be a Noetherian normal domain.  Then the following are equivalent:
\begin{enumerate}[label=\textup{(\roman*)}]
  \item $D$ is a UFD;
  \item every finite rank-one reflexive $D$-module is free of rank one.
\end{enumerate}
\end{proposition}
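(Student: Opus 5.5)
The plan is to pass through divisorial (height-one) ideals, using the standard fact that a Noetherian (hence Krull) normal domain is a UFD if and only if every height-one prime is principal, or equivalently every divisorial ideal is principal. The bridge between the two conditions is the correspondence between finite rank-one reflexive modules and divisorial fractional ideals.

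\emph{Step 1 (rank-one reflexive modules are divisorial ideals).} Let $M$ be a finite rank-one reflexive $D$-module with fraction field $K$. By \eqref{eq:reflexive-characterizations}, $M$ is torsion-free. Hence $M\hookrightarrow M\otimes_D K\cong K$. Clearing the denominators of finitely many generators, I embed $M$ as an ideal $I\subseteq D$, which is nonzero. Since $M$ is reflexive, the intersection characterization gives
\[
I=\bigcap_{\htp\mathfrak p=1} I_{\mathfrak p}.
\]
Each $I_{\mathfrak p}$ is a nonzero ideal of the DVR $D_{\mathfrak p}$, so $I$ is a divisorial ideal. Conversely, every divisorial ideal $I=\bigcap_{\htp\mathfrak p=1}I_{\mathfrak p}$ is finite, torsion-free and of rank one, and by the same characterization it is reflexive. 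In particular each height-one prime $\mathfrak p$ is reflexive: it is the intersection of its localizations, because $\mathfrak p D_{\mathfrak q}=D_{\mathfrak q}$ for every height-one $\mathfrak q\neq\mathfrak p$, while $\mathfrak pD_{\mathfrak p}\cap D=\mathfrak p$ (that is, $\mathfrak p=\mathfrak p^{(1)}$). I would justify the last containment $\bigcap_{\mathfrak q} \mathfrak p D_{\mathfrak q}\subseteq \mathfrak p$ using $D=\bigcap_{\htp\mathfrak q=1}D_{\mathfrak q}$, which holds because $D$ is reflexive over itself.

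\emph{Step 2 ((ii)$\Rightarrow$(i)).} Assuming (ii), every height-one prime $\mathfrak p$ is free of rank one by Step 1, so $\mathfrak p=aD$ is principal. Nagata's/Kaplansky's criterion, that a Noetherian domain in which every height-one prime is principal is a UFD, then gives (i). Alternatively, I can argue directly: Noetherianity gives factorizations into irreducibles, and each irreducible $a$ lies in a minimal prime $\mathfrak p$ over $aD$. By Krull's principal ideal theorem $\mathfrak p$ has height one, so $\mathfrak p=bD$ with $b$ prime. Then $b\mid a$, so $a$ is an associate of $b$ and hence prime.

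\emph{Step 3 ((i)$\Rightarrow$(ii)).} Assuming $D$ is a UFD, take $M\cong I$ as in Step 1 with $I$ divisorial, and let $d=\gcd$ of a finite generating set of $I$. For each height-one prime $\mathfrak p=\pi D$ (principal in a UFD), $I_{\mathfrak p}=\pi^{v_\pi(d)}D_{\mathfrak p}$, since the valuation of an ideal is the minimum over its generators. Therefore
\[
I=\bigcap_{\mathfrak p} I_{\mathfrak p}=\bigcap_{\mathfrak p} dD_{\mathfrak p}=dD,
\]
where the last equality again uses $D=\bigcap D_{\mathfrak p}$ after dividing by $d$. So $M\cong dD\cong D$ is free.

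The main obstacle is Step 1: making the identification of rank-one reflexive modules with divisorial ideals rigorous. In particular I need the reflexivity of a height-one prime, which relies on the intersection description and on $\mathfrak pD_{\mathfrak p}\cap D=\mathfrak p$. The remaining steps are classical UFD facts.
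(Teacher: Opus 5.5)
Your proposal is correct and follows essentially the paper's proof: for (ii)$\Rightarrow$(i) you show a height-one prime is reflexive via the intersection characterization, using $D=\bigcap D_{\mathfrak q}$ and $\mathfrak pD_{\mathfrak p}\cap D=\mathfrak p$, and then apply the principal-height-one-primes criterion. For (i)$\Rightarrow$(ii) your $\gcd$ $d$ plays exactly the role of the paper's element $a=\prod\pi_{\mathfrak p}^{n_{\mathfrak p}}$, giving $I=\bigcap I_{\mathfrak p}=\bigcap dD_{\mathfrak p}=dD$; the only differences are cosmetic (you clear denominators to work with an integral ideal, and you sketch a proof of Kaplansky's criterion rather than citing it).
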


\begin{proof}
Suppose first that $D$ is a UFD, and let $M$ be a finite rank-one reflexive
$D$-module.  A UFD is integrally closed
\cite[p.~63]{AtiyahMacdonald}.  Put $K=\Frac(D)$.  If $D$ is a field, the
assertion is immediate.  Otherwise, choose
an isomorphism $M\otimes_DK\simeq K$, and use it
to regard $M$ as a finite fractional $D$-submodule $I\subset K$.  If
$\mathfrak p$ is a height-one prime, the UFD property gives a prime element
$\pi_{\mathfrak p}$ with $\mathfrak p=(\pi_{\mathfrak p})$, and
$D_{\mathfrak p}$ is a DVR, so
\[
  I_{\mathfrak p}
  =\pi_{\mathfrak p}^{n_{\mathfrak p}}D_{\mathfrak p}
\]
for a uniquely determined integer $n_{\mathfrak p}$.

Only finitely many $n_{\mathfrak p}$ are nonzero.  Indeed, choose fractional
generators $f_1,\ldots,f_r\in K$ of $I$.  Then
\[
  n_{\mathfrak p}=\min_i v_{\mathfrak p}(f_i),
\]
and the numerator and denominator of each $f_i$ have only finitely many
prime factors in the UFD $D$.  Define
\[
  a=\prod_{\htp\mathfrak p=1}
       \pi_{\mathfrak p}^{n_{\mathfrak p}}\in K^\times;
\]
the product is finite.  Then $I_{\mathfrak p}=aD_{\mathfrak p}$ for every
height-one prime.  Both $I$ and $aD$ are reflexive, so
\eqref{eq:reflexive-characterizations} gives
\[
 I=\bigcap_{\htp\mathfrak p=1}I_{\mathfrak p}
  =\bigcap_{\htp\mathfrak p=1}aD_{\mathfrak p}
  =aD.
\]
Thus $M\simeq D$.

Conversely, assume (ii), and let $P$ be a height-one prime of $D$.  It is a
finite torsion-free module of rank one.  We verify that it is reflexive.
For every height-one prime $Q$,
\[
 P_Q=
 \begin{cases}
   PD_P,&Q=P,\\
   D_Q,&Q\neq P.
 \end{cases}
\]
Indeed, distinct height-one primes are incomparable, so for $Q\neq P$ an
element of $P\setminus Q$ becomes a unit in $D_Q$.  If
$c\in\bigcap_{\htp Q=1}P_Q$ inside $K$, then
$c\in\bigcap_{\htp Q=1}D_Q=D$ by
\eqref{eq:reflexive-characterizations}.  Also $c\in PD_P$, so some
$s\notin P$ satisfies $sc\in P$; primality gives $c\in P$.  Hence
\[
  P=\bigcap_{\htp Q=1}P_Q,
\]
and $P$ is reflexive by \eqref{eq:reflexive-characterizations}.  Hypothesis
(ii) makes $P$ free of rank one, hence principal.  Thus every height-one
prime of $D$ is principal, and $D$ is a UFD by
\cite[Theorem~47, p.~141]{MatsumuraCA}.
\end{proof}

We next isolate the three homological ingredients used in the power-series
argument.  The first lifts freeness of a dual across a regular element in the
Jacobson radical.

\begin{lemma}[Lifting freeness of a dual]
\label{lem:lifting-dual}
Let $S$ be a Noetherian ring, let $u\in\Jac(S)$ be a nonzerodivisor on
$S$, and suppose that
\[
  \overline S=S/uS
\]
is a domain.  Let $M$ be a finite $S$-module such that $u$ is a
nonzerodivisor on $M$, and put
\[
  \overline M=M/uM.
\]
Assume the following:
\begin{enumerate}[label=\textup{(\roman*)}]
  \item $\overline M$ is torsion-free over $\overline S$;
  \item $\overline M^\vee$ is a finite free $\overline S$-module;
  \item $\overline M_{\mathfrak p}$ is free over
  $\overline S_{\mathfrak p}$ for every
  $\mathfrak p\in\Spec(\overline S)$ satisfying
  $\depth\overline S_{\mathfrak p}\leq 2$.
\end{enumerate}
Then $M^\vee$ is a finite free $S$-module.
\end{lemma}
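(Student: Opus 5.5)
The plan is to dualize the multiplication-by-$u$ sequence, show that reduction modulo $u$ identifies $M^\vee/uM^\vee$ with $\overline M^\vee$, and then lift freeness by Nakayama. Start from the exact sequence $0\to M\xrightarrow{u}M\to\overline M\to0$ and apply $\Hom_S(-,S)$. Since $u$ is a nonzerodivisor on $S$ and kills $\overline M$, we have $\Hom_S(\overline M,S)=0$. Rees' lemma gives $\Ext^{i}_S(\overline M,S)\cong\Ext^{i-1}_{\overline S}(\overline M,\overline S)$, and in particular $\Ext^1_S(\overline M,S)\cong\overline M^\vee$. Writing $E=\Ext^1_S(M,S)$, the long exact sequence yields
\[
 0\to M^\vee/uM^\vee\xrightarrow{\ \rho\ }\overline M^\vee\to C\to 0,
 \qquad C=\ker\bigl(E\xrightarrow{u}E\bigr),
\]
where $\rho$ is the reduction map. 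The whole proof reduces to showing $C=0$.

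Granting $C=0$, we get $M^\vee/uM^\vee\cong\overline M^\vee\cong\overline S^{\,r}$ by (ii). Lift a basis to a map $S^r\to M^\vee$; it is surjective by Nakayama, since $u\in\Jac(S)$. Let $N$ be its kernel. Because $M^\vee\subseteq\Hom_S(M,S)$ embeds in a product of copies of $S$, the element $u$ is $M^\vee$-regular, so $\Tor_1^S(M^\vee,\overline S)=0$. Hence $N/uN=0$, and Nakayama gives $N=0$, so $M^\vee$ is free.

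To prove $C=0$, I argue by contradiction. Since $C$ is killed by $u$, it is an $\overline S$-module; suppose $C\neq0$ and pick $\mathfrak p\in\Ass_{\overline S}(C)$, with preimage $P\subset S$.

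\emph{Case $\depth\overline S_{\mathfrak p}\le2$.} By (iii), $\overline M_{\mathfrak p}=M_P/uM_P$ is free. Since $u\in PS_P$ is $M_P$-regular, the standard local-flatness lifting (Nakayama plus $\Tor_1$-vanishing, as above) makes $M_P$ free. Then $E_P=0$, so $C_{\mathfrak p}=0$, a contradiction.

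\emph{Case $\depth\overline S_{\mathfrak p}\ge3$.} Here the aim is a depth count in the displayed sequence localized at $\mathfrak p$. The term $\overline M^\vee_{\mathfrak p}$ is free of depth at least $3$. If $\depth(M^\vee/uM^\vee)_{\mathfrak p}\ge2$, the depth lemma gives $\depth C_{\mathfrak p}\ge1$, contradicting $\mathfrak p\in\Ass C$. By \eqref{eq:regular-element-depth} this requires $\depth M^\vee_P\ge3$.

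The main obstacle is this depth bound. Dualizing a presentation of $M$ only gives the generic estimate $\depth M^\vee_P\ge\min(2,\depth S_P)$, which is one short. What I would try first is to take $\mathfrak p$ \emph{minimal} in $\Supp C$, rather than merely associated. Then, on the punctured spectrum of $S_P$ near $V(u)$, the map $\rho$ is an isomorphism onto a free module, so Nakayama makes $M^\vee$ free there. This puts $C_{\mathfrak p}$ of finite length, and its nonvanishing becomes nonvanishing of $H^1_{\mathfrak p}\bigl((M^\vee/uM^\vee)_{\mathfrak p}\bigr)$. That in turn must be traced through local cohomology of $M^\vee_P$ and the corresponding Ext modules of $M_P$.

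To rule that out I would use the torsion-freeness hypothesis (i). It embeds $\overline M$ into $\overline M^{\vee\vee}$, which is free, with cokernel supported only at primes of depth at least $3$ by (iii). Comparing this embedding with the dual sequence should kill the offending $\Ext$ contribution. If this comparison fails, the fallback is to replace $M$ by $M^{\vee\vee}$ and redo the count for a reflexive module, whose dual has better depth.
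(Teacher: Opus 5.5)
Your reduction is correct as far as it goes. The Rees identification $\Ext^1_S(\overline M,S)\cong\overline M^\vee$, the exact sequence $0\to M^\vee/uM^\vee\to\overline M^\vee\to C\to0$ with $C=\ker(u\colon E\to E)$, the final Nakayama lifting, and the case $\depth\overline S_{\mathfrak p}\le2$ all work.

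The gap is the case $\depth\overline S_{\mathfrak p}\ge3$, which is the real content of the lemma. You reduce it to $\depth M^\vee_P\ge3$. That bound holds a posteriori, but you give no way to prove it. The remaining suggestions (minimal primes and local cohomology, ``comparing'' the embedding with the dual sequence, passing to $M^{\vee\vee}$) are not arguments. The last one does not even carry the hypotheses along, since (i)--(iii) concern $M/uM$ and say nothing about $M^{\vee\vee}/uM^{\vee\vee}$. As written, the proof does not establish $C=0$.

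The missing step is one term further along your own long exact sequence, which continues
$E\xrightarrow{u}E\to\Ext^2_S(\overline M,S)\cong\Ext^1_{\overline S}(\overline M,\overline S)$.
The hypotheses control $E/uE$, not $\ker(u|_E)$ directly, and you can kill the former as follows.
\begin{enumerate}
  \item By (i) and (ii), $\overline M$ embeds in the free module $\overline M^{\vee\vee}$.
  \item By (iii), the cokernel $Q$ vanishes at every prime of depth at most $2$. Hence $\operatorname{grade}(\operatorname{Ann}Q,\overline S)\ge3$, so $\Ext^i_{\overline S}(Q,\overline S)=0$ for $i\le2$.
  \item Dualizing $0\to\overline M\to\overline M^{\vee\vee}\to Q\to0$ gives $\Ext^1_{\overline S}(\overline M,\overline S)\hookrightarrow\Ext^2_{\overline S}(Q,\overline S)=0$.
\end{enumerate}
Thus $uE=E$. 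Nakayama applied to the finite module $E$ gives $E=0$, hence $C=0$ globally, with no localization or case split.

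This is precisely the paper's argument. The paper runs the same step through $\Ext^1_S(M,\overline S)\cong\Ext^1_{\overline S}(\overline M,\overline S)$ and the sequence obtained from $0\to S\xrightarrow{u}S\to\overline S\to0$. With this step inserted, the rest of your proposal goes through.
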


\begin{proof}
Because $\overline M$ is finite and torsion-free over the domain
$\overline S$, its evaluation map into the double dual is injective.  Indeed,
put $K=\Frac(\overline S)$ and
$V=\overline M\otimes_{\overline S}K$.  Choose a $K$-basis of $V$ and a
nonzero $d\in\overline S$ clearing the coordinates of a finite generating
set of $\overline M$.  Since $\overline M$ is torsion-free, multiplication by
$d$, followed by the coordinate isomorphism $V\cong K^r$, embeds
$\overline M$ into $\overline S^r$.  The coordinate maps are elements of
$\overline M^\vee$ and separate points, so the evaluation map is injective.
Thus there is an exact sequence of finite modules
\begin{equation}
\label{eq:lifting-cokernel}
  0\longrightarrow\overline M
   \longrightarrow\overline M^{\vee\vee}
   \longrightarrow C\longrightarrow0.
\end{equation}
The middle module is finite free, because $\overline M^\vee$ is finite
free.

Every finite module over a Noetherian ring is finitely presented, so the
finite-presentation argument given above shows that duals commute with
localization.  At every prime
$\mathfrak p$ with
$\depth\overline S_{\mathfrak p}\leq2$, hypothesis (iii) says that the
localized evaluation map is an isomorphism.  Hence $C_{\mathfrak p}=0$.
It follows that
\begin{equation}
\label{eq:support-c-depth}
 \depth\overline S_{\mathfrak p}\geq3
 \qquad\text{for every }\mathfrak p\in\Supp_{\overline S}(C).
\end{equation}
If $C=0$, the required Ext vanishing is immediate.  Otherwise, put
$I=\operatorname{Ann}_{\overline S}(C)$.  Since $C$ is finite,
$\Supp(C)=V(I)$.  The grade--support and grade--Ext formulas give
\[
 \operatorname{grade}(I,\overline S)
 =\inf_{\mathfrak p\in\Supp(C)}
     \depth\overline S_{\mathfrak p}
 =\inf\{i:\Ext^i_{\overline S}(C,\overline S)\neq0\};
\]
see \cite[Proposition~1.2.10(a),(e)]{BrunsHerzog}.  Therefore
\eqref{eq:support-c-depth} gives
\[
 \Ext^i_{\overline S}(C,\overline S)=0
 \qquad (i=0,1,2).
\]
Applying $\Hom_{\overline S}(-,\overline S)$ to
\eqref{eq:lifting-cokernel}, and using that
$\overline M^{\vee\vee}$ is free, the relevant part of the long exact
sequence is
\[
 0=\Ext^1_{\overline S}
       (\overline M^{\vee\vee},\overline S)
 \longrightarrow
 \Ext^1_{\overline S}(\overline M,\overline S)
 \longrightarrow
 \Ext^2_{\overline S}(C,\overline S)=0.
\]
Consequently
\[
 \Ext^1_{\overline S}(\overline M,\overline S)=0.
\]

The exact sequence
\[
 0\longrightarrow S\xrightarrow{\,u\,}S
 \longrightarrow\overline S\longrightarrow0
\]
is a length-one free resolution of $\overline S$.  Since $u$ is also
regular on $M$,
\[
 \Tor^S_1(M,\overline S)=\ker(u:M\to M)=0,
 \qquad
 \Tor^S_i(M,\overline S)=0\quad(i>1).
\]
Hence, if $P_\bullet\to M$ is a free resolution, then
$P_\bullet\otimes_S\overline S$ is a free resolution of
$\overline M$.  Termwise adjunction gives
\[
 \Ext^1_S(M,\overline S)
 \cong\Ext^1_{\overline S}(\overline M,\overline S)=0.
\]

Apply $\Hom_S(M,-)$ to the same short exact sequence.  The resulting long
exact sequence contains
\[
 \Ext^1_S(M,S)\xrightarrow{\,u\,}\Ext^1_S(M,S)
 \longrightarrow\Ext^1_S(M,\overline S)=0.
\]
Thus multiplication by $u$ is surjective on $\Ext^1_S(M,S)$.  This Ext
module is finite: construct a resolution of $M$ by finite free modules and
compute Ext from the resulting complex of finite modules.  Since
$u\in\Jac(S)$, Nakayama's lemma
\cite[Proposition~2.6]{AtiyahMacdonald} yields
\begin{equation}
\label{eq:ext1-S-zero}
 \Ext^1_S(M,S)=0.
\end{equation}

The preceding part of the same long exact sequence, together with
\eqref{eq:ext1-S-zero}, gives
\[
 M^\vee/uM^\vee
 \cong\Hom_S(M,\overline S).
\]
Every $S$-linear map from $M$ to $\overline S$ kills $uM$, so
\begin{equation}
\label{eq:dual-mod-u}
 M^\vee/uM^\vee
 \cong\Hom_{\overline S}(\overline M,\overline S)
 =\overline M^\vee.
\end{equation}

The module $M^\vee$ is finite because $M$ is finitely presented over the
Noetherian ring $S$.  Lift a basis of the right-hand side to elements of
$M^\vee$.  They define
an $S$-linear map
\[
 \varphi:S^r\longrightarrow M^\vee
\]
whose reduction modulo $u$ is an isomorphism.  If
$L=\operatorname{coker}\varphi$, then $L/uL=0$; the module $L$ is finite,
so Nakayama gives
$L=0$.  Thus $\varphi$ is surjective.

Let $K=\ker\varphi$, which is finite because $S$ is Noetherian.  The
element $u$ is regular on $M^\vee$: if $u\lambda=0$, then
$u\lambda(m)=0$ in $S$ for every $m\in M$, and regularity of $u$ on
$S$ gives $\lambda=0$.  Therefore
\[
 \Tor^S_1(M^\vee,\overline S)=0.
\]
Reducing
\[
 0\longrightarrow K\longrightarrow S^r
 \xrightarrow{\varphi}M^\vee\longrightarrow0
\]
modulo $u$, we obtain
\[
 0\longrightarrow K/uK\longrightarrow\overline S^r
 \xrightarrow{\overline\varphi}M^\vee/uM^\vee
 \longrightarrow0.
\]
The map $\overline\varphi$ is an isomorphism by construction and
\eqref{eq:dual-mod-u}; hence $K/uK=0$.  A second application of Nakayama
gives $K=0$, so $M^\vee\simeq S^r$.
\end{proof}

The next lemma identifies the depth-two obstruction to the local freeness
required by the lifting argument.

\begin{lemma}[Depth-two detector]
\label{lem:depth-two-detector}
Let $S$ be a Noetherian normal domain, let $M\neq0$ be a finite reflexive
$S$-module, and let $\mathfrak q\in\Spec(S)$.  If
\[
 \depth S_{\mathfrak q}\geq3,
 \qquad
 \depth M_{\mathfrak q}=2,
\]
then
\[
 \mathfrak q\in
 \Ass_S\Ext^1_S(M^\vee,S).
\]
\end{lemma}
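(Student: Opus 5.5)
The plan is to localize at $\mathfrak q$ and show that the localized module $\Ext^1_{S_{\mathfrak q}}(M_{\mathfrak q}^\vee,S_{\mathfrak q})$ is nonzero of depth $0$. Since $S$ is Noetherian and $M^\vee$ is finite, Ext commutes with localization, and duals commute with localization as recorded earlier. Moreover, $\mathfrak q\in\Ass_S(E)$ if and only if $\mathfrak qS_{\mathfrak q}\in\Ass_{S_{\mathfrak q}}(E_{\mathfrak q})$. So we may assume $S=(S,\mathfrak m)$ is local with $\mathfrak q=\mathfrak m$, $\depth S\ge 3$ and $\depth M=2$. Reflexivity is preserved by localization, as noted before \cref{prop:reflexive-ufd}.

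Choose a finite free presentation $F_1\xrightarrow{\partial}F_0\to M^\vee\to0$ and dualize. Left exactness of $\Hom_S(-,S)$ and the definition of $\Ext^1$ give an exact sequence
\[
 0\longrightarrow M^{\vee\vee}\longrightarrow F_0^\vee\xrightarrow{\partial^\vee}F_1^\vee\longrightarrow \Ext^1_S(M^\vee,S)\longrightarrow0 .
\]
Here $\Ext^1$ is computed from the dual complex, since $F_0^\vee\to F_1^\vee\to F_2^\vee$ extends the dual of a free resolution. The cokernel of $\partial^\vee$ is $\Ext^1$ only inside $\ker(F_1^\vee\to F_2^\vee)$, so I would instead take $E=\operatorname{coker}\partial^\vee$. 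This $E$ contains $\Ext^1_S(M^\vee,S)$ as the submodule $\ker(F_1^\vee\to F_2^\vee)/\operatorname{im}\partial^\vee$. An element of $E$ killed by $\mathfrak m$ whose image in $F_2^\vee$ vanishes is what we need. To handle this cleanly, set $W=\operatorname{im}\partial^\vee$ and $Z=\ker(F_1^\vee\to F_2^\vee)$, so that $\Ext^1=Z/W$. Since $M$ is reflexive, $M^{\vee\vee}\cong M$, and we get the short exact sequences
\[
 0\to M\to F_0^\vee\to W\to0,\qquad 0\to W\to Z\to \Ext^1_S(M^\vee,S)\to0 .
\]

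Next comes the depth bookkeeping, using the depth lemma \cite[Proposition~1.2.9]{BrunsHerzog}. If $W=0$, then $M\cong F_0^\vee$ is free, hence of depth $\ge3$, a contradiction. So $W\ne0$. Since $\depth F_0^\vee\ge3>2=\depth M$, the first sequence forces $\depth W=\depth M-1=1$. The module $Z$ is a second syzygy-type module: it is the kernel of a map of free modules, so it embeds in $F_1^\vee$ with cokernel a submodule of the free module $F_2^\vee$. Applying the depth lemma twice, with $\depth S\ge 3$, gives $\depth Z\ge 2$. If $\Ext^1_S(M^\vee,S)$ were $0$, then $W\cong Z$ would have depth $\ge2$, a contradiction. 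So $E'=\Ext^1_S(M^\vee,S)\neq0$. The depth lemma applied to $0\to W\to Z\to E'\to0$ then gives $\depth E'\ge\min(\depth Z,\depth W-1)$ and $\depth W\ge\min(\depth Z,\depth E'+1)$. Since $\depth W=1<2\le\depth Z$, the latter forces $\depth E'+1\le 1$, that is, $\depth E'=0$. Hence $\mathfrak m\in\Ass E'$, which is what we want.

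The main obstacle I expect is exactly this middle point: not confusing $\operatorname{coker}\partial^\vee$ with $\Ext^1$, and securing $\depth Z\ge2$. This requires $\depth S\ge3$; in fact depth at least $2$ would suffice for the second-syzygy estimate. The remaining steps are routine applications of the depth lemma and the compatibility of Ext, duals, and associated primes with localization.
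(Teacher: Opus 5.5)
Your argument is correct and is essentially the paper's proof: your $Z=\ker(F_1^\vee\to F_2^\vee)$ is $K^\vee$ for the first syzygy $K$ of $M^\vee$, your $W$ is the paper's $C$, and the depth bookkeeping is the same ($\depth W=1$, $\depth Z\geq 2$, hence $\Ext^1_S(M^\vee,S)\neq0$ of depth $0$). The one thing to fix is your first displayed four-term sequence ending in $\Ext^1_S(M^\vee,S)\to 0$: it is false as written, since that cokernel is the transpose of $M^\vee$ rather than $\Ext^1$, so delete it and keep only the $Z/W$ description you then adopt.
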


\begin{proof}
In the present finite Noetherian setting, the finite-presentation argument
above shows that duals and reflexivity commute with localization.  Ext also
commutes with localization; this follows by localizing a resolution by finite
free modules, as in \cite[Proposition~3.3.10]{Weibel}.  More precisely,
\[
 (M^\vee)_{\mathfrak q}\cong(M_{\mathfrak q})^\vee,
\qquad
 \Ext^1_S(M^\vee,S)_{\mathfrak q}
 \cong
 \Ext^1_{S_{\mathfrak q}}
 ((M_{\mathfrak q})^\vee,S_{\mathfrak q}).
\]
It is therefore enough to work over the local ring $S_{\mathfrak q}$.
For the rest of the proof rename this local ring $S$, write its maximal
ideal as $\mathfrak m$, and rename $M_{\mathfrak q}$ as $M$.  Put
\[
 d=\depth S\geq3,
 \qquad N=M^\vee,
 \qquad \Delta=\Ext^1_S(N,S).
\]

Choose a finite free surjection $P\twoheadrightarrow N$, and let $K$ be
its kernel:
\begin{equation}
\label{eq:first-syzygy}
 0\longrightarrow K\longrightarrow P\longrightarrow N\longrightarrow0.
\end{equation}
All these modules are finite.  Dualizing \eqref{eq:first-syzygy} gives the
exact sequence
\[
 0\longrightarrow N^\vee\longrightarrow P^\vee
 \longrightarrow K^\vee\longrightarrow\Delta\longrightarrow0,
\]
because $\Ext^1_S(P,S)=0$.  Reflexivity of $M$ identifies
$N^\vee=M^{\vee\vee}$ with $M$.  If

\[
 C=\operatorname{im}(P^\vee\to K^\vee),
\]
we obtain two short exact sequences
\begin{align}
 0&\longrightarrow M\longrightarrow P^\vee
      \longrightarrow C\longrightarrow0,
 \label{eq:depth-C-one}\\
 0&\longrightarrow C\longrightarrow K^\vee
      \longrightarrow\Delta\longrightarrow0.
 \label{eq:C-K-Delta}
\end{align}

The free module $P^\vee$ has depth $d$, whereas $M$ has depth $2$.
The module $C$ is nonzero: if $C=0$, then
\eqref{eq:depth-C-one} would give $M\cong P^\vee$, contrary to these two
depth values.
The depth lemma applied to \eqref{eq:depth-C-one} gives
\begin{equation}
\label{eq:C-depth-exact}
 \depth C=1.
\end{equation}
For completeness, it first gives $\depth C\geq1$.  If
$\depth C\geq2$, its other inequality would give
\[
 2=\depth M
 \geq\min\{\depth P^\vee,\depth C+1\}
 \geq3,
\]
a contradiction.

We next prove
\begin{equation}
\label{eq:K-dual-depth}
 \depth K^\vee\geq2.
\end{equation}
Choose a finite presentation $Q_1\to Q_0\to K\to0$.  Dualizing gives
\[
 0\longrightarrow K^\vee\longrightarrow Q_0^\vee
 \longrightarrow D\longrightarrow0,
\]
where $D=\operatorname{im}(Q_0^\vee\to Q_1^\vee)$.  If $D=0$, then
$K^\vee\cong Q_0^\vee$ is free.  If $D\neq0$, then $D$ is a submodule
of the free module $Q_1^\vee$, and therefore is torsion-free.  Over the
local domain $S$, a nonzero finite torsion-free module has depth at least
one.  The depth lemma consequently yields
\[
 \depth K^\vee
 \geq\min\{\depth Q_0^\vee,\depth D+1\}
 \geq2,
\]
which proves \eqref{eq:K-dual-depth}.

If $\Delta=0$, then \eqref{eq:C-K-Delta} would identify $C$ with
$K^\vee$, contradicting \eqref{eq:C-depth-exact} and
\eqref{eq:K-dual-depth}.  If $\Delta\neq0$ and
$\depth\Delta\geq1$, then \eqref{eq:C-K-Delta} and the depth lemma would
give
\[
 \depth C\geq
 \min\{\depth K^\vee,\depth\Delta+1\}\geq2,
\]
again contradicting \eqref{eq:C-depth-exact}.  Hence
$\depth\Delta=0$.  A finite module over a Noetherian local ring has depth
zero precisely when the maximal ideal is associated, so
$\mathfrak m\in\Ass_S(\Delta)$.  Undoing localization gives
\[
 \mathfrak q\in\Ass_S\Ext^1_S(M^\vee,S),
\]
as required.  Here the depth lemma is
\cite[Proposition~1.2.9]{BrunsHerzog}.  For clarity, the other two facts can
be recovered as follows.  The associated primes of a finite module are
finite, and their union is its set of zero divisors
\cite[Chapter~IV, \S1, no.~1, Corollary~2 to Proposition~2, and no.~4,
Theorems~1--2 and Corollary]{BourbakiCA}.  Thus, in a local ring, depth zero
is equivalent by finite prime avoidance to the maximal ideal being
associated.  Associated primes commute with localization in the Noetherian
case by \cite[Chapter~IV, \S1, no.~2, Proposition~5 and its
Corollary]{BourbakiCA}; applying that result to the original prime
$\mathfrak q$ undoes the localization.
\end{proof}

Finally, reduction of a rank-one reflexive module modulo a prime element
preserves torsion-freeness and rank.

\begin{lemma}[Reduction modulo a prime element]
\label{lem:torsion-free-section}
Let $S$ be a Noetherian normal domain, let $u\in S$ be a nonzero prime
element, and let $F$ be a finite rank-one reflexive $S$-module.  Then
\[
 F/uF
\]
is a finite torsion-free rank-one module over $S/uS$.
\end{lemma}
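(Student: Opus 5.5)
The plan is to embed $F$ in $S$ as a divisorial ideal and reduce to a statement about ideals. Since $F$ is torsion-free of rank one, I choose an isomorphism $F\otimes_S K\simeq K$ with $K=\Frac(S)$ and clear denominators. This realizes $F$ as a nonzero ideal $I\subseteq S$, which is reflexive and hence satisfies $I=\bigcap_{\htp\mathfrak p=1}I_{\mathfrak p}$ by \eqref{eq:reflexive-characterizations}. Put $\overline S=S/uS$, which is a domain because $u$ is prime, and write $\mathfrak P=uS$, a height-one prime. Finiteness of $F/uF$ is immediate. Two things remain: that the rank is one, and that $F/uF$ is torsion-free.

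\textbf{Torsion-freeness.} I would first normalize $I$ so that $I\not\subseteq uS$. Since $S_{\mathfrak P}$ is a DVR with uniformizer $u$, we have $I_{\mathfrak P}=u^nS_{\mathfrak P}$ for some $n\ge 0$. Replacing $I$ by $u^{-n}I$, which is still an isomorphic reflexive module, we may take $I_{\mathfrak P}=S_{\mathfrak P}$. Now suppose $\bar s\,\bar f=0$ in $I/uI$, where $\bar s\neq 0$ in $\overline S$ and $f\in I$. This means $sf=ug$ for some $g\in I$ and some $s\notin uS$. To show $f\in uI$, I use the intersection characterization: it suffices to check that $f/u\in I_{\mathfrak p}$ for every height-one prime $\mathfrak p$.

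\begin{itemize}
\item For $\mathfrak p\ne\mathfrak P$, the element $u$ is a unit in $S_{\mathfrak p}$ (the prime $\mathfrak P$ is not contained in $\mathfrak p$ since both have height one), so $f/u\in I_{\mathfrak p}$ trivially.
\item At $\mathfrak P$ itself, we have $s\notin\mathfrak P$, so $s$ is a unit in $S_{\mathfrak P}$. Hence $f/u=g/s\in I_{\mathfrak P}$.
\end{itemize}

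Therefore $f/u\in\bigcap I_{\mathfrak p}=I$, that is, $f\in uI$. This proves torsion-freeness. Note that the normalization is not actually needed here: the argument at $\mathfrak P$ works for any $I$.

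\textbf{Rank.} I expect this to be the main obstacle. I would localize at $\mathfrak P$. Since $\overline S_{(0)}=S_{\mathfrak P}/uS_{\mathfrak P}$ is the residue field $\kappa(\mathfrak P)$, we get
\[
(F/uF)\otimes_{\overline S}\kappa(\mathfrak P)=F_{\mathfrak P}/uF_{\mathfrak P}.
\]
Because $F_{\mathfrak P}\simeq S_{\mathfrak P}$ is free of rank one over the DVR, this is one-dimensional. Nonvanishing of $F/uF$ follows as well, since $F_{\mathfrak P}\neq 0$ and Nakayama applies over $S_{\mathfrak P}$.

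Throughout, I use that localization commutes with quotients and with the identification $\overline S_{(0)}=\kappa(\mathfrak P)$.
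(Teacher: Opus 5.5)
Your proof is correct and follows the paper's argument essentially step for step. You use the same verification that $f/u$ lies in $F_{\mathfrak p}$ for every height-one $\mathfrak p$ (inverting $s$ at $uS$ and $u$ elsewhere), then the intersection characterization, and the same rank computation $(F/uF)_{(0)}\cong F_{\mathfrak P}/uF_{\mathfrak P}$ over the DVR $S_{\mathfrak P}$. Realizing $F$ as an ideal $I\subseteq S$ and the normalization are only cosmetic (the latter is unnecessary, as you note), and you should justify $\htp(uS)=1$ by Krull's principal ideal theorem, as the paper does.
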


\begin{proof}
The module $F$ is torsion-free, so multiplication by $u$ is injective on
$F$.  The ideal $(u)$ is a nonzero prime.  The chain
$(0)\subsetneq(u)$ gives $\htp(u)\geq1$, while the principal ideal theorem
gives $\htp(u)\leq1$ \cite[Corollary~11.17]{AtiyahMacdonald}.  Hence
$\htp(u)=1$.

It remains to prove torsion-freeness after reduction.  Let
$a\notin(u)$, and suppose that $f,h\in F$ satisfy
\[
 af=uh.
\]
Inside the generic fiber $F\otimes_S\Frac(S)$, set $w=f/u$.  We claim
that $w\in F_{\mathfrak p}$ for every height-one prime $\mathfrak p$.
If $\mathfrak p=(u)$, then $a$ is a unit in $S_{\mathfrak p}$, and
\[
 w=h/a\in F_{\mathfrak p}.
\]
If $\mathfrak p\neq(u)$, then $u\notin\mathfrak p$: otherwise the inclusion
$(u)\subseteq\mathfrak p$ between height-one prime ideals would be an
equality.  Thus $u$ is a unit in $S_{\mathfrak p}$, and
\[
 w=f/u\in F_{\mathfrak p}.
\]
The intersection characterization \eqref{eq:reflexive-characterizations}
therefore gives $w\in F$.  Hence $f=uw\in uF$.  This says exactly that
$F/uF$ is torsion-free over $S/uS$.

Normality implies that $S_{(u)}$ is a DVR.  The finite
torsion-free rank-one $S_{(u)}$-module $F_{(u)}$ is therefore free of rank
one.  At the generic point of $\Spec(S/uS)$,
\[
 (F/uF)_{(0)}
 \cong F_{(u)}/uF_{(u)}
 \cong S_{(u)}/uS_{(u)}
 \cong\Frac(S/uS).
\]
Thus $F/uF$ has rank one over $S/uS$.
\end{proof}

\section{Movable prime sections}

For the rest of the paper, let $R$ satisfy the hypotheses of
\cref{thm:bayart-noetherian}, and put
\[
 A=R[[x]],\qquad B=R[[x,y]].
\]

Since $A$ is a UFD, it is a normal domain
\cite[p.~63]{AtiyahMacdonald}.  The inclusion of constant
series $R\hookrightarrow A$ shows first that $R$ is a domain.  We also
need that $R$ is normal.  Let $K=\Frac(R)$, and let $c\in K$ be integral
over $R$.  The same monic equation shows that $c$ is integral over $A$,
and $K\subseteq\Frac(A)$.  Normality of $A$ therefore gives $c\in A$.
Under the coefficientwise inclusion
\[
 A=R[[x]]\hookrightarrow K[[x]],
\]
the element $c$ is the constant series $c+0x+0x^2+\cdots$.  Membership
in $R[[x]]$ forces its constant coefficient $c$ to belong to $R$.  Thus
$R$ is a Noetherian normal domain.

By successive applications of the formal power-series theorem for Noetherian
rings \cite[Corollary~10.27]{AtiyahMacdonald}, the rings $A$ and $B$ are
Noetherian.  For normality of $B$, recall that a
Noetherian normal domain is completely integrally closed, and that a formal
power-series ring over a completely integrally closed domain is again
completely integrally closed
\cite[Chapter~V, \S1, no.~1, Corollary to Proposition~1, p.~304; no.~4,
pp.~312--313, especially Proposition~14]{BourbakiCA}.  For the first
assertion, suppose that $c$ is almost integral over a Noetherian normal
domain $C$.  Then $C[c]$ is a submodule of $d^{-1}C$ for
some $0\neq d\in C$; it is therefore finite over $C$, so $c$ is integral
and hence belongs to $C$.  Applying the quoted complete-integral-closure
result successively in the variables $x$ and $y$, we conclude that
$B=R[[x,y]]$ is a Noetherian normal domain.

By \cref{prop:reflexive-ufd}, it remains to show that every finite
rank-one reflexive $B$-module is free.  The following lemma supplies a prime
section avoiding the obstruction detected by
\cref{lem:depth-two-detector}.

\begin{lemma}[A movable prime section]
\label{lem:movable-section}
For every finite $B$-module $T$, there is an integer $N\geq1$ such that,
with $z=y-x^N$, the following hold:
\begin{enumerate}[label=\textup{(\roman*)}]
  \item $z\in\Jac(B)$;
  \item $z$ is a nonzero prime element and $B/zB\cong A$;
  \item $z\notin\mathfrak q$ for every
        $\mathfrak q\in\Ass_B(T)\setminus V(x,y)$.
\end{enumerate}
\end{lemma}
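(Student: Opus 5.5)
The plan is to verify the three properties one at a time. Only (iii) depends on $T$, and it reduces to a counting argument over the finite set $\Ass_B(T)$.

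\textbf{Step (i).} For $z\in\Jac(B)$, it suffices that $z\in(x,y)B$ and that $(x,y)B\subseteq\Jac(B)$. For the second claim, let $b\in B$ and $g\in(x,y)B$. The element $1+bg$ has a formal inverse $\sum_{k\ge0}(-bg)^k$. This sum converges in $B$ because $(bg)^k\in(x,y)^k$, and $B$ is complete in the $(x,y)$-adic topology. Hence $1+bg$ is a unit for every $b$, which is exactly the statement $g\in\Jac(B)$. Finally $z=y-x^N$ lies in $(x,y)B$ for every $N\ge1$.

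\textbf{Step (ii).} Consider the $R$-algebra map $\sigma:B\to A$ defined by $x\mapsto x$ and $y\mapsto x^N$, namely $\sigma\bigl(\sum_j f_j(x)y^j\bigr)=\sum_j f_j(x)x^{Nj}$. This sum converges $x$-adically in $A$ because $N\ge1$. The map is surjective, since it restricts to the identity on $A\subset B$, and $z\in\ker\sigma$.

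For the reverse inclusion, take $f=\sum_j f_j(x)y^j$. Write $y^j-x^{Nj}=(y-x^N)\,e_j$ with $e_j=\sum_{i<j}y^i x^{N(j-1-i)}$, so that $e_j\in(x,y)^{j-1}$. Then $\sum_j f_j e_j$ converges in $B$, and
\[
  f=\sigma(f)+z\sum_j f_j e_j .
\]
Hence $\ker\sigma=zB$ and $B/zB\cong A$. Since $A$ is a domain and $z\neq0$, the ideal $zB$ is prime, so $z$ is a nonzero prime element. The convergence bookkeeping in this step is the only mildly technical point; I expect it to be the main, if modest, obstacle.

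\textbf{Step (iii).} The set $\Ass_B(T)$ is finite because $B$ is Noetherian and $T$ is finite. Fix $\mathfrak q\in\Ass_B(T)\setminus V(x,y)$.

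I claim that $y-x^N\in\mathfrak q$ for at most one $N\ge1$. Suppose instead that $y-x^N\in\mathfrak q$ and $y-x^M\in\mathfrak q$ with $N<M$. Subtracting gives
\[
  x^N\bigl(1-x^{M-N}\bigr)\in\mathfrak q .
\]
The factor $1-x^{M-N}$ is a unit in $B$ by Step (i), so $x^N\in\mathfrak q$. Primality then gives $x\in\mathfrak q$, and from $y-x^N\in\mathfrak q$ we get $y\in\mathfrak q$. This puts $\mathfrak q$ in $V(x,y)$, which is a contradiction.

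Each such $\mathfrak q$ therefore excludes at most one value of $N$, and only finitely many values are excluded in total. Choosing any $N\ge1$ outside this finite set gives (iii), while (i) and (ii) hold for every $N\ge1$.
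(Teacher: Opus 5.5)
Your proof is correct and matches the paper's argument. For (i) both use geometric series to show $(x,y)B\subseteq\Jac(B)$; for (iii) both observe that a prime not containing $(x,y)$ contains $y-x^n$ for at most one $n$, because $1-x^{m-n}$ is a unit, and then use finiteness of $\Ass_B(T)$. The only difference is in (ii): the paper uses the automorphism $y\mapsto y+x^N$ of $B$, which carries $z$ to $y$, to get $B/zB\cong B/yB\cong A$, whereas you compute the kernel of the substitution $y\mapsto x^N$ directly via $f=\sigma(f)+z\sum_j f_je_j$; either route works.
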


\begin{proof}

For every integer $n\geq1$, put
\[
 z_n=y-x^n.
\]
We claim that every prime $\mathfrak q\not\supseteq(x,y)$ contains at most
one of the elements $z_n$.  Indeed, if $n<m$ and $z_n,z_m\in\mathfrak q$,
then
\[
 x^n-x^m=x^n(1-x^{m-n})\in\mathfrak q.
\]
The element $1-x^{m-n}$ is a unit, with inverse
\[
 1+x^{m-n}+x^{2(m-n)}+\cdots.
\]
Hence $x\in\mathfrak q$, and then $y=z_n+x^n\in\mathfrak q$, contrary
to the choice of $\mathfrak q$.

The finite module $T$ has only finitely many associated primes
\cite[Chapter~IV, \S1, no.~4, Theorems~1--2 and
Corollary]{BourbakiCA}.  From
the preceding paragraph, each member of
\[
 \Ass_B(T)\setminus V(x,y)
\]
forbids at most one positive integer $n$.  We may therefore choose an
integer $N\geq1$ such that
\[
 z:=y-x^N
 \quad\text{belongs to no prime in }
 \Ass_B(T)\setminus V(x,y).
\]

We next verify that $z$ is a prime element and that $B/zB\cong A$.
For $f=\sum_{i,j\geq0}a_{ij}x^iy^j$, define
\[
 \sigma_N(f)=\sum_{i,j\geq0}a_{ij}x^i(y+x^N)^j.
\]
For fixed $p,q$, the coefficient of $x^py^q$ in this expression is
\[
 \sum_{\substack{j\geq q\\i+N(j-q)=p}}
 \binom jq a_{ij}.
\]
Only the indices
\[
 q\leq j\leq q+\left\lfloor\frac pN\right\rfloor
\]
can occur, so this is a finite sum.  Thus the substitution is well-defined.
It is an $R$-algebra homomorphism: one may verify this on every finite
$(x,y)$-adic truncation, where it is ordinary polynomial substitution, and
then pass to the inverse limit.  Substitution $y\mapsto y-x^N$ gives its
inverse.  Since $\sigma_N(y-x^N)=y$,
\begin{equation}
\label{eq:section-quotient}
 B/zB\cong B/yB\cong R[[x]]=A.
\end{equation}
The ring $A$ is a domain, so $z$ is a nonzero prime element of $B$.

Finally,
\[
 z\in\Jac(B).
\]
Indeed, $x,y\in\Jac(B)$: for every $g\in B$, the coefficientwise
convergent geometric series give
\[
 (1-xg)^{-1}=\sum_{n\geq0}(xg)^n,
 \qquad
 (1-yg)^{-1}=\sum_{n\geq0}(yg)^n.
\]
Since the Jacobson radical is an ideal, it contains $(x,y)$, and hence it
contains $y-x^N$.
\end{proof}

We apply the moving-section lemma to the Ext module that records the
possible depth-two failure of a reflexive module.

\begin{proposition}[Low-depth freeness on a movable section]
\label{prop:section-freeness}
Let $F$ be a finite rank-one reflexive $B$-module, put
\[
  \Delta=\Ext^1_B(F^\vee,B),
\]
choose $z=y-x^N$ as in \cref{lem:movable-section} for $T=\Delta$, and set
$E=F/zF$.  Then:
\begin{enumerate}[label=\textup{(\roman*)}]
  \item $E$ is finite, torsion-free, and of rank one, and
        $E^\vee\cong A$;
  \item $E_x\cong A_x$;
  \item if $\mathfrak p\in\Spec(A)$ contains $x$ and
        $\depth A_{\mathfrak p}\leq2$, then
        $E_{\mathfrak p}\cong A_{\mathfrak p}$.
\end{enumerate}
Consequently, $E_{\mathfrak p}$ is free for every
$\mathfrak p\in\Spec(A)$ with $\depth A_{\mathfrak p}\leq2$.
\end{proposition}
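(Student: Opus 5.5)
The plan is to prove (i) from the reduction lemma and the UFD property of $A$. For (ii) I use the choice of $z$ together with the depth-two detector to show that $E_x$ is reflexive. For (iii) I show directly that the relevant local rings of $B$ are regular. Throughout, I identify $A$ with $B/zB$ via \eqref{eq:section-quotient}. A prime $\mathfrak p$ of $A$ then corresponds to a prime $\mathfrak q\supseteq zB$ of $B$ with $A_{\mathfrak p}\cong B_{\mathfrak q}/zB_{\mathfrak q}$ and $E_{\mathfrak p}\cong F_{\mathfrak q}/zF_{\mathfrak q}$.

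For (i), \cref{lem:torsion-free-section} applies because $z$ is a nonzero prime element of the Noetherian normal domain $B$ by \cref{lem:movable-section}. It shows that $E$ is finite, torsion-free and of rank one over $A$. Then $E^\vee$ is a finite reflexive $A$-module, since duals are reflexive over Noetherian normal domains. It has rank one because $E$ does. Since $A$ is a UFD, \cref{prop:reflexive-ufd} gives $E^\vee\cong A$.

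For (ii), I first note that $A_x$ is a Noetherian normal domain and a UFD, being a localization of the UFD $A$. Also, $E_x$ is torsion-free of rank one over $A_x$. By \cref{prop:reflexive-ufd} it suffices to show that $E_x$ is reflexive, that is, satisfies $(S_2)$. Take $\mathfrak p\in\Spec A$ with $x\notin\mathfrak p$ and $\depth A_{\mathfrak p}\geq2$, and let $\mathfrak q$ be the corresponding prime of $B$. Then $\depth B_{\mathfrak q}\geq3$ by \eqref{eq:regular-element-depth} applied to $H=B_{\mathfrak q}$. Since $x\notin\mathfrak q$, we have $\mathfrak q\notin V(x,y)$. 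Since $z\in\mathfrak q$, \cref{lem:movable-section}(iii) gives $\mathfrak q\notin\Ass_B\Delta$.

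Next, $F\cong(F^\vee)^\vee$ is a dual module. The depth-lemma argument from Section~2 therefore gives $\depth F_{\mathfrak q}\geq2$ whenever $\depth B_{\mathfrak q}\geq2$. Now \cref{lem:depth-two-detector} rules out $\depth F_{\mathfrak q}=2$, so $\depth F_{\mathfrak q}\geq3$. Since $z$ is $F$-regular, \eqref{eq:regular-element-depth} yields $\depth E_{\mathfrak p}\geq2$. Thus $E_x$ is torsion-free with $(S_2)$, hence reflexive, and so $E_x\cong A_x$.

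I expect (iii) to be the main obstacle, since here $\mathfrak q\supseteq(x,y)$ and the avoidance in \cref{lem:movable-section} gives nothing. My plan is to show that $B_{\mathfrak q}$ is regular.
\begin{enumerate}[label=\textup{(\alph*)}]
  \item Since $x\in\mathfrak p$ and $A/xA\cong R$, we have $\mathfrak p=(\mathfrak p_0,x)$ for the prime $\mathfrak p_0=\mathfrak p\cap R$.
  \item We have $A_{\mathfrak p}/xA_{\mathfrak p}\cong R_{\mathfrak p_0}$. Formula \eqref{eq:regular-element-depth} gives $\depth R_{\mathfrak p_0}=\depth A_{\mathfrak p}-1\leq1$.
  \item Since $R$ is a Noetherian normal domain, $R_{\mathfrak p_0}$ is a field or a DVR, hence regular.
  \item Because $x$ is a nonzerodivisor on $A_{\mathfrak p}$ with regular quotient $R_{\mathfrak p_0}$, the ring $A_{\mathfrak p}$ is regular.
  \item Likewise, $z$ is a nonzerodivisor on $B_{\mathfrak q}$ with $B_{\mathfrak q}/zB_{\mathfrak q}\cong A_{\mathfrak p}$ regular, so $B_{\mathfrak q}$ is regular.
\end{enumerate}
By Auslander--Buchsbaum, $B_{\mathfrak q}$ is a UFD. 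Reflexivity localizes, so $F_{\mathfrak q}$ is rank-one reflexive over $B_{\mathfrak q}$, and \cref{prop:reflexive-ufd} makes it free. Hence $E_{\mathfrak p}\cong F_{\mathfrak q}/zF_{\mathfrak q}\cong A_{\mathfrak p}$.

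The final assertion follows by splitting on whether $x\in\mathfrak p$. If $x\notin\mathfrak p$, then $E_{\mathfrak p}$ is a localization of $E_x\cong A_x$ and so is free by (ii). If $x\in\mathfrak p$ and $\depth A_{\mathfrak p}\leq2$, then $E_{\mathfrak p}$ is free by (iii).
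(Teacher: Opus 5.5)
Your proposal is correct and follows the paper's proof: (i) comes from the reduction lemma plus the UFD property of $A$; (ii) uses the movable section and the depth-two detector to force $\depth F_{\mathfrak q}\geq 3$, hence $(S_2)$ for $E_x$; and (iii) shows $B_{\mathfrak q}$ is regular, hence a UFD, so $F_{\mathfrak q}$ is free. The only differences are local and both are valid: you finish (ii) by using that $A_x$ is itself a UFD rather than dualizing $E^\vee\cong A$, and in (iii) you get regularity by ascending it twice along the nonzerodivisors $x$ and $z$ (if $t\in\mathfrak m_C$ is a nonzerodivisor and $C/tC$ is regular, then $C$ is regular), which replaces the paper's field/DVR case split with explicit prime chains matching the embedding dimension.
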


\begin{proof}
The module $\Delta$ is finite: resolve the finite module $F^\vee$ by
finite free $B$-modules and compute Ext from the resulting complex of
finite modules.  Thus \cref{lem:movable-section} applies.

Set
\[
 E=F/zF,
\]
viewed through \eqref{eq:section-quotient} as an $A$-module.  By
\cref{lem:torsion-free-section}, $E$ is a finite torsion-free rank-one
$A$-module.  Its dual
\[
 E^\vee=\Hom_A(E,A)
\]
is finite, reflexive, and rank one: duals of finite modules are reflexive over
a Noetherian normal domain, and the rank assertion follows after tensoring
with $\Frac(A)$.  Since $A$ is a UFD,
\cref{prop:reflexive-ufd} yields
\begin{equation}
\label{eq:E-dual-free}
 E^\vee\cong A.
\end{equation}

We next prove $E_x\cong A_x$.

Let $\mathfrak s\in\Spec(A)$ satisfy
\[
 x\notin\mathfrak s,
 \qquad \htp\mathfrak s\geq2,
\]
and let $\mathfrak q$ be its inverse image under the quotient map
$B\to B/zB\cong A$.  Then $z\in\mathfrak q$ and $x\notin\mathfrak q$,
so $\mathfrak q\not\supseteq(x,y)$.  Because $\mathfrak q$ contains the
chosen element $z$, \cref{lem:movable-section}(iii) gives
\begin{equation}
\label{eq:q-not-ass}
 \mathfrak q\notin\Ass_B(\Delta).
\end{equation}

The normal domain $A$ satisfies $(S_2)$.  Since
$\dim A_{\mathfrak s}=\htp\mathfrak s\geq2$,
\[
 \depth A_{\mathfrak s}\geq2.
\]
The element $z$ is regular on $B_{\mathfrak q}$, belongs to its maximal
ideal, and
\[
 B_{\mathfrak q}/zB_{\mathfrak q}\cong A_{\mathfrak s}.
\]
Therefore, by the depth formula for reduction modulo a regular element,
\[
 \depth B_{\mathfrak q}
 =\depth A_{\mathfrak s}+1\geq3.
\]
Because $F$ is reflexive over the normal domain $B$, it satisfies
$(S_2)$; hence
\[
 \depth F_{\mathfrak q}\geq2.
\]
If equality held, \cref{lem:depth-two-detector} would imply
\[
 \mathfrak q\in\Ass_B\Ext^1_B(F^\vee,B)
 =\Ass_B(\Delta),
\]
contrary to \eqref{eq:q-not-ass}.  Here we use explicitly the finite-module
localization isomorphism
\[
 \Delta_{\mathfrak q}\cong
 \Ext^1_{B_{\mathfrak q}}
 ((F_{\mathfrak q})^\vee,B_{\mathfrak q})
\]
and the localization criterion for associated primes.  Thus
\[
 \depth F_{\mathfrak q}\geq3.
\]
Since $z$ is also regular on the torsion-free module $F_{\mathfrak q}$,
reduction by $z$ gives
\begin{equation}
\label{eq:E-depth-away-x}
 \depth E_{\mathfrak s}
 =\depth F_{\mathfrak q}-1\geq2.
\end{equation}

If $\mathfrak s$ has height one and does not contain $x$, then
$A_{\mathfrak s}$ is a DVR, and the finite torsion-free rank-one module
$E_{\mathfrak s}$ is free.  At the height-zero prime, $E$ becomes a
one-dimensional vector space over $\Frac(A)$.  Primes of $A_x$ correspond
to primes $\mathfrak s$ of $A$ not containing $x$, and under this
correspondence the local rings and heights agree:
$(A_x)_{\mathfrak sA_x}=A_{\mathfrak s}$ and
$\htp(\mathfrak sA_x)=\htp(\mathfrak s)$.  Together with
\eqref{eq:E-depth-away-x}, this shows that $E_x$ is torsion-free and
satisfies $(S_2)$ over the normal domain $A_x$.  Hence $E_x$ is
reflexive.  Duals commute with localization, so
\[
 (E_x)^\vee\cong(E^\vee)_x\cong A_x
\]
by \eqref{eq:E-dual-free}.  Taking another dual and using reflexivity gives
\begin{equation}
\label{eq:E-free-away-x}
 E_x\cong(E_x)^{\vee\vee}\cong A_x.
\end{equation}

It remains to consider primes $\mathfrak p$ containing $x$ and satisfying
$\depth A_{\mathfrak p}\leq2$.

Let $\mathfrak p\in\Spec(A)$ satisfy
\[
 x\in\mathfrak p,
 \qquad \depth A_{\mathfrak p}\leq2,
\]
and put $\mathfrak a=\mathfrak p\cap R$.  Since $A/(x)\cong R$, primes
containing $x$ correspond to primes of $R$, and
\begin{equation}
\label{eq:p-a-x}
 \mathfrak p=(\mathfrak a,x),
 \qquad
 A_{\mathfrak p}/xA_{\mathfrak p}\cong R_{\mathfrak a}.
\end{equation}
The element $x$ is regular on $A_{\mathfrak p}$, so
\begin{equation}
\label{eq:depth-Ra}
 \depth R_{\mathfrak a}
 =\depth A_{\mathfrak p}-1\leq1.
\end{equation}
The ring $R_{\mathfrak a}$ is a Noetherian normal local domain.  If it had
dimension at least two, $(S_2)$ would force its depth to be at least two,
contrary to \eqref{eq:depth-Ra}.  Thus
\[
 \dim R_{\mathfrak a}\leq1.
\]
If the dimension is zero, $R_{\mathfrak a}$ is a field.  If it is one,
normality makes $R_{\mathfrak a}$ a DVR
\cite[Proposition~9.2]{AtiyahMacdonald}.

Let $\mathfrak q$ be the inverse image of $\mathfrak p$ in $B$ under
$B\to B/zB\cong A$.  By \eqref{eq:p-a-x},
\[
 \mathfrak q=(\mathfrak a,x,y-x^N)
             =(\mathfrak a,x,y).
\]
We prove directly, without using catenarity or a dimension formula, that
$B_{\mathfrak q}$ is regular.

Suppose first that $R_{\mathfrak a}$ is a field.  Since $R$ is a domain,
this forces $\mathfrak a=(0)$.  Hence the maximal ideal of
$B_{\mathfrak q}$ is generated by $x,y$.  The chain
\[
 (0)\subsetneq(x)B_{\mathfrak q}
 \subsetneq(x,y)B_{\mathfrak q}
\]
is a strict chain of prime ideals: $B$ is a domain and
$B/(x)\cong R[[y]]$ is a domain; distinct primes contained in
$\mathfrak q$ remain distinct after localization.  Consequently
\[
 2\leq\dim B_{\mathfrak q}
 \leq\operatorname{edim}B_{\mathfrak q}\leq2.
\]
Thus equality holds and $B_{\mathfrak q}$ is regular.

Suppose next that $R_{\mathfrak a}$ is a DVR, and let $\pi$ be a
uniformizer.  Since every element of $R\setminus\mathfrak a$ becomes a unit
in $B_{\mathfrak q}$, the map $R\to B_{\mathfrak q}$ factors through
$R_{\mathfrak a}$, and
\[
 \mathfrak aB_{\mathfrak q}=\pi B_{\mathfrak q}.
\]
Thus the maximal ideal of $B_{\mathfrak q}$ is generated by
$\pi,x,y$.  There is a strict chain
\begin{equation}
\label{eq:prime-chain-DVR}
 (0)\subsetneq\mathfrak aB_{\mathfrak q}
 \subsetneq(\mathfrak a,x)B_{\mathfrak q}
 \subsetneq(\mathfrak a,x,y)B_{\mathfrak q}.
\end{equation}
To justify primality before localization, note that $\mathfrak a$ is
finitely generated because $R$ is Noetherian, and coefficientwise reduction
gives
\[
 B/\mathfrak aB\cong(R/\mathfrak a)[[x,y]],
 \qquad
 B/(\mathfrak a,x)B\cong(R/\mathfrak a)[[y]].
\]
Both quotient rings are domains.  The ideals in
\eqref{eq:prime-chain-DVR} are distinct primes contained in
$\mathfrak q$, so their localized chain is still strict.  Therefore
\[
 3\leq\dim B_{\mathfrak q}
 \leq\operatorname{edim}B_{\mathfrak q}\leq3,
\]
and $B_{\mathfrak q}$ is regular in this case as well.  For a Noetherian
local ring, the dimension is at most the minimal number of generators of its
maximal ideal, namely its embedding dimension, and equality is the
regular-local condition
\cite[Corollary~11.15 and Theorem~11.22]{AtiyahMacdonald}.  Thus no
catenarity is hidden in this argument.

A regular local ring is a UFD
\cite[Theorem~48, p.~142]{MatsumuraCA}.  The localized
module $F_{\mathfrak q}$ is finite, reflexive, and rank one, so
\cref{prop:reflexive-ufd} gives
\[
 F_{\mathfrak q}\cong B_{\mathfrak q}.
\]
Reducing modulo $z$, and using
$B_{\mathfrak q}/zB_{\mathfrak q}\cong A_{\mathfrak p}$, we conclude that
\begin{equation}
\label{eq:E-free-over-x}
 E_{\mathfrak p}
 \cong F_{\mathfrak q}/zF_{\mathfrak q}
 \cong A_{\mathfrak p}.
\end{equation}
If $x\notin\mathfrak p$, then \eqref{eq:E-free-away-x} localizes to give
$E_{\mathfrak p}\cong A_{\mathfrak p}$.  Together with
\eqref{eq:E-free-over-x}, this proves the final assertion and completes the
proof.
\end{proof}

\begin{corollary}[Reflexive modules over $B$]
\label{cor:reflexive-B-free}
Every finite rank-one reflexive $B$-module is free of rank one.
\end{corollary}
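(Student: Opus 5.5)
The plan is to combine \cref{prop:section-freeness} with \cref{lem:lifting-dual}, applied to $S=B$, $u=z$ and $M=F$. This shows that $F^\vee$ is free. Reflexivity of $F$ then gives $F\cong F^{\vee\vee}$, which is free, and the rank-one hypothesis makes $F$ free of rank one.

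Concretely, let $F$ be a finite rank-one reflexive $B$-module and put $\Delta=\Ext^1_B(F^\vee,B)$. Choose $z=y-x^N$ by \cref{lem:movable-section} with $T=\Delta$, and set $E=F/zF$. The hypotheses of \cref{lem:lifting-dual} are checked as follows:
\begin{itemize}
\item $B$ is Noetherian.
\item $z\in\Jac(B)$ by \cref{lem:movable-section}(i).
\item $z$ is nonzero in the domain $B$, so it is a nonzerodivisor on $B$.
\item $\overline S=B/zB\cong A$ is a domain by \cref{lem:movable-section}(ii).
\item $z$ is a nonzerodivisor on $F$ because $F$ is torsion-free, being reflexive over a domain.
\end{itemize}
The three numbered hypotheses then follow from \cref{prop:section-freeness}. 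Hypothesis (i), that $\overline M=E$ is torsion-free over $A$, is part (i) of the proposition. Hypothesis (ii), that $E^\vee$ is finite free, holds because $E^\vee\cong A$. Hypothesis (iii), local freeness of $E_{\mathfrak p}$ at every prime with $\depth A_{\mathfrak p}\leq2$, is the final assertion of the proposition.

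\cref{lem:lifting-dual} therefore gives $F^\vee\cong B^r$ for some $r$. Tensoring with $\Frac(B)$ shows $r=1$, since $F$ has rank one and $F^\vee\otimes\Frac(B)\cong\Hom(F\otimes\Frac(B),\Frac(B))$ because $F$ is finitely presented. Reflexivity of $F$ then gives $F\cong F^{\vee\vee}\cong(B)^{\vee}\cong B$.

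There is no real obstacle left, since all the substance is in \cref{prop:section-freeness} and \cref{lem:lifting-dual}. The one point to watch is that the identification $B/zB\cong A$ is compatible with the duals and localizations used in the lemma: $\overline M^\vee$ there means $\Hom_{\overline S}(E,\overline S)$, which corresponds under the isomorphism to $E^\vee$ over $A$. With that checked, \cref{prop:reflexive-ufd}, applied to the Noetherian normal domain $B$, yields \cref{thm:bayart-noetherian} as a consequence of the corollary.
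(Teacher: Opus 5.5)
Your proposal is correct and matches the paper's proof essentially step for step: you choose $z=y-x^N$ for $T=\Delta$ via \cref{lem:movable-section}, feed the conclusions of \cref{prop:section-freeness} into \cref{lem:lifting-dual} with $S=B$, $u=z$, $M=F$, then use the generic rank to get $F^\vee\cong B$ and reflexivity to get $F\cong B$. The one detail you leave implicit, which the paper states, is that $\Delta=\Ext^1_B(F^\vee,B)$ is a finite $B$-module (compute it from a resolution of $F^\vee$ by finite free modules); \cref{lem:movable-section} requires this.
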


\begin{proof}
Let $F$ be such a module and put
\[
  \Delta=\Ext^1_B(F^\vee,B).
\]
As in the proof of \cref{prop:section-freeness}, the module $\Delta$ is
finite.  Choose $z=y-x^N$ by \cref{lem:movable-section} for
$T=\Delta$, and put $E=F/zF$.  By
\cref{prop:section-freeness}, the module $E$ is torsion-free,
$E^\vee\cong A$, and $E_{\mathfrak p}$ is free whenever
$\depth A_{\mathfrak p}\leq2$.

The other hypotheses of \cref{lem:lifting-dual} follow from
\cref{lem:movable-section}: the element $z$ belongs to $\Jac(B)$, is
regular on the domain $B$, and satisfies $B/zB\cong A$; it is also regular
on the torsion-free module $F$.  Apply \cref{lem:lifting-dual} with
\[
 S=B,\qquad u=z,\qquad M=F,
 \qquad\overline S=A,\qquad\overline M=E.
\]
The lemma shows that $F^\vee$ is free.  Localization at the generic point gives
\[
 F^\vee\otimes_B\Frac(B)
 \cong\Hom_{\Frac(B)}
       (F\otimes_B\Frac(B),\Frac(B))
 \cong\Frac(B),
\]
so $F^\vee$ has rank one and is isomorphic to $B$.  Reflexivity now gives
\[
  F\cong F^{\vee\vee}\cong B.
\]
\end{proof}

\begin{proof}[Proof of \cref{thm:bayart-noetherian}]
The ring $B$ is a Noetherian normal domain, and
\cref{cor:reflexive-B-free} shows that every finite rank-one reflexive
$B$-module is free.  Hence $B=R[[x,y]]$ is a UFD by
\cref{prop:reflexive-ufd}.
\end{proof}

\section*{Acknowledgments}

We used GPT-5.6 Sol, Claude Fable 5, and an agentic harness built around these
models to assist with literature search, hypothesis testing, exploration and
elimination of potential approaches, wording refinement, and manuscript
proofreading.  We are very grateful to Hieu M. Vu for technical support in the
use of the AI tools and agentic system.  The authors independently verified and
streamlined the proofs and were responsible for composing the manuscript.

\raggedbottom
\begingroup
\raggedright
\bibliographystyle{plain}
\bibliography{example_paper}

@article{Samuel1961,
  author  = {Samuel, Pierre},
  title   = {On Unique Factorization Domains},
  journal = {Illinois Journal of Mathematics},
  volume  = {5},
  number  = {1},
  year    = {1961},
  pages   = {1--17},
  doi     = {10.1215/ijm/1255629643},
  note    = {\url{https://doi.org/10.1215/ijm/1255629643}}
}

@article{Buchsbaum1961,
  author  = {Buchsbaum, David A.},
  title   = {Some Remarks on Factorization in Power Series Rings},
  journal = {Journal of Mathematics and Mechanics},
  volume  = {10},
  number  = {5},
  year    = {1961},
  pages   = {749--753},
  doi     = {10.1512/iumj.1961.10.10052},
  note    = {\url{https://iumj.org/article/1362/}}
}

@article{Bayart1973,
  author  = {Bayart, Marc},
  title   = {S{\'e}ries formelles sur un anneau factoriel},
  journal = {Comptes Rendus de l'Acad{\'e}mie des Sciences de Paris,
             S{\'e}rie A},
  volume  = {277},
  number  = {11},
  year    = {1973},
  pages   = {449--450},
  note    = {\url{https://gallica.bnf.fr/ark:/12148/bpt6k6312609m/f107.item}}
}

@incollection{Bayart1981,
  author    = {Bayart, Marc},
  title     = {Factorialit{\'e} et s{\'e}ries formelles irr{\'e}ductibles {II}},
  booktitle = {S{\'e}minaire d'Alg{\`e}bre Paul Dubreil et Marie-Paule
               Malliavin: Proceedings, Paris 1980 (33e ann{\'e}e)},
  series    = {Lecture Notes in Mathematics},
  volume    = {867},
  publisher = {Springer-Verlag},
  address   = {Berlin},
  year      = {1981},
  pages     = {174--213},
  doi       = {10.1007/BFb0090387},
  note      = {\url{https://doi.org/10.1007/BFb0090387}}
}

@misc{JonesParan2026,
  author       = {Jones, Adam and Paran, Elad},
  title        = {On {Landweber's} Unique Factorization Problem},
  year         = {2026},
  howpublished = {arXiv:2607.03475v2 [math.AC]},
  note         = {\url{https://arxiv.org/abs/2607.03475}}
}

@article{Samuel1964,
  author  = {Samuel, Pierre},
  title   = {Anneaux gradu{\'e}s factoriels et modules r{\'e}flexifs},
  journal = {Bulletin de la Soci{\'e}t{\'e} Math{\'e}matique de France},
  volume  = {92},
  year    = {1964},
  pages   = {237--249},
  doi     = {10.24033/bsmf.1608},
  note    = {\url{https://doi.org/10.24033/bsmf.1608}}
}

@book{BrunsHerzog,
  author    = {Bruns, Winfried and Herzog, J{\"u}rgen},
  title     = {{Cohen--Macaulay Rings}},
  series    = {Cambridge Studies in Advanced Mathematics},
  volume    = {39},
  edition   = {Revised},
  publisher = {Cambridge University Press},
  address   = {Cambridge},
  year      = {1998},
  doi       = {10.1017/CBO9780511608681},
  note      = {\url{https://doi.org/10.1017/CBO9780511608681}}
}

@book{AtiyahMacdonald,
  author    = {Atiyah, Michael F. and Macdonald, Ian G.},
  title     = {Introduction to Commutative Algebra},
  publisher = {Addison--Wesley},
  address   = {Reading, Massachusetts},
  year      = {1969},
  isbn      = {0-201-00361-9},
  note      = {\url{https://catalogue.bnf.fr/ark:/12148/cb373622871}}
}

@book{BourbakiCA,
  author    = {Bourbaki, Nicolas},
  title     = {Commutative Algebra: Chapters 1--7},
  series    = {Elements of Mathematics},
  publisher = {Springer-Verlag},
  address   = {Berlin},
  year      = {1989},
  note      = {English translation; softcover reprint, 1998;
               \url{https://link.springer.com/book/9783540642398}}
}

@book{MatsumuraCA,
  author    = {Matsumura, Hideyuki},
  title     = {Commutative Algebra},
  edition   = {Second},
  publisher = {Benjamin/Cummings},
  address   = {Reading, Massachusetts},
  year      = {1980},
  isbn      = {0-8053-7026-9},
  note      = {\url{https://search.worldcat.org/title/Commutative-algebra/oclc/638535177}}
}

@book{Weibel,
  author    = {Weibel, Charles A.},
  title     = {An Introduction to Homological Algebra},
  series    = {Cambridge Studies in Advanced Mathematics},
  volume    = {38},
  publisher = {Cambridge University Press},
  address   = {Cambridge},
  year      = {1994},
  doi       = {10.1017/CBO9781139644136},
  note      = {\url{https://doi.org/10.1017/CBO9781139644136}}
}

@article{Lasker1905,
  author  = {Lasker, Emanuel},
  title   = {{Zur Theorie der Moduln und Ideale}},
  journal = {Mathematische Annalen},
  volume  = {60},
  year    = {1905},
  pages   = {20--116},
  doi     = {10.1007/BF01447495},
  note    = {\url{https://doi.org/10.1007/BF01447495}}
}

@article{Krull1938,
  author  = {Krull, Wolfgang},
  title   = {{Beitr{\"a}ge zur Arithmetik kommutativer Integrit{\"a}tsbereiche.
             V.~Potenzreihenringe}},
  journal = {Mathematische Zeitschrift},
  volume  = {43},
  year    = {1938},
  pages   = {768--782},
  doi     = {10.1007/BF01181119},
  note    = {\url{https://doi.org/10.1007/BF01181119}}
}

@article{Cohen1946,
  author  = {Cohen, Irvin S.},
  title   = {On the structure and ideal theory of complete local rings},
  journal = {Transactions of the American Mathematical Society},
  volume  = {59},
  number  = {1},
  year    = {1946},
  pages   = {54--106},
  doi     = {10.1090/S0002-9947-1946-0016094-3},
  note    = {\url{https://doi.org/10.1090/S0002-9947-1946-0016094-3}}
}

@article{Salmon1964,
  author  = {Salmon, Paolo},
  title   = {Sur les s{\'e}ries formelles restreintes},
  journal = {Bulletin de la Soci{\'e}t{\'e} Math{\'e}matique de France},
  volume  = {92},
  year    = {1964},
  pages   = {385--410},
  doi     = {10.24033/bsmf.1613},
  note    = {\url{https://doi.org/10.24033/bsmf.1613}}
}

@article{Salmon1966,
  author  = {Salmon, Paolo},
  title   = {Su un problema posto da {P.~Samuel}},
  journal = {Atti della Accademia Nazionale dei Lincei. Rendiconti della
             Classe di Scienze Fisiche, Matematiche e Naturali},
  series  = {8},
  volume  = {40},
  number  = {5},
  year    = {1966},
  pages   = {801--803},
  note    = {\url{https://www.bdim.eu/item?id=RLINA_1966_8_40_5_801_0}}
}

@article{Claborn1965,
  author  = {Claborn, Luther},
  title   = {Note generalizing a result of {Samuel's}},
  journal = {Pacific Journal of Mathematics},
  volume  = {15},
  number  = {3},
  year    = {1965},
  pages   = {805--808},
  note    = {\url{https://msp.org/pjm/1965/15-3/pjm-v15-n3-p07-p.pdf}}
}

@article{Danilov1970,
  author  = {Danilov, V. I.},
  title   = {On a conjecture of {Samuel}},
  journal = {Mathematics of the {USSR}-Sbornik},
  volume  = {10},
  number  = {1},
  year    = {1970},
  pages   = {127--137},
  doi     = {10.1070/SM1970v010n01ABEH001590},
  note    = {\url{https://doi.org/10.1070/SM1970v010n01ABEH001590}}
}

@book{SGA2,
  author    = {Grothendieck, A. and Raynaud, M.},
  title     = {Cohomologie locale des faisceaux coh{\'e}rents et th{\'e}or{\`e}mes de Lefschetz locaux et globaux ({SGA} 2)},
  series    = {Advanced Studies in Pure Mathematics},
  volume    = {2},
  publisher = {North-Holland},
  address   = {Amsterdam},
  year      = {1968},
  note      = {Updated edition: \url{https://arxiv.org/abs/math/0511279}}
}

@article{HartshorneOgus1974,
  author  = {Hartshorne, Robin and Ogus, Arthur},
  title   = {On the factoriality of local rings of small embedding codimension},
  journal = {Communications in Algebra},
  volume  = {1},
  number  = {5},
  year    = {1974},
  pages   = {415--437},
  doi     = {10.1080/00927877408548627},
  note    = {\url{https://doi.org/10.1080/00927877408548627}}
}

@inproceedings{Lipman1975,
  author    = {Lipman, Joseph},
  title     = {Unique factorization in complete local rings},
  booktitle = {Algebraic Geometry (Proc. Sympos. Pure Math., Vol. 29, Humboldt State Univ., Arcata, Calif., 1974)},
  publisher = {American Mathematical Society},
  address   = {Providence, RI},
  year      = {1975},
  pages     = {531--546},
  doi       = {10.1090/pspum/029/0374125},
  note      = {\url{https://doi.org/10.1090/pspum/029/0374125}}
}
\endgroup

\end{document}